\newcommand{\mpd}[1]{\textrm{mpd}(#1)}
\renewcommand{\rank}[1]{\mathrm{rank}(#1)}
\newcommand{\dk}[1]{\textrm{dim ker}(#1)}
\newtheorem{remark}{Remark}
\definecolor{DarkBlue}{rgb}{0.00,0.00,0.55}
\definecolor{Black}{rgb}{0.00,0.00,0.00}
\title{The number of distinct eigenvalues of a matrix after perturbation}
\author{
  P.~E.~Farrell\thanks{Mathematical Institute, University of Oxford, Oxford, UK.
    Center for Biomedical Computing, Simula Research Laboratory, Oslo, Norway
    (\texttt{patrick.farrell@maths.ox.ac.uk}).
    This research is funded by EPSRC grants EP/K030930/1, EP/M019721/1, 
    and a Center of Excellence grant from the Research
    Council of Norway to the Center for Biomedical Computing at Simula
    Research Laboratory. The author would like to thank
    A.~J.~Wathen, M.~A.~Saunders,
    C.~Beentjes, and N.~I.~M.~Gould for useful discussions.
    }
  }
\begin{document}
\maketitle

\begin{abstract}
We prove a new theorem relating the number of distinct eigenvalues of a matrix
after perturbation to the prior number of distinct eigenvalues, the rank of the
update, and the degree of nondiagonalizability of the matrix. In particular, a
rank one update applied to a diagonalizable matrix can at most double the number
of distinct eigenvalues. The theorem applies to both symmetric and nonsymmetric
matrices and perturbations, of arbitrary magnitudes. An an application, we prove that in
exact arithmetic the number of
Krylov iterations required to exactly solve a linear system involving a diagonalizable
matrix can at most double after a rank one update.
\end{abstract}

\begin{keywords}
distinct eigenvalues, perturbation, Krylov methods, deflation.
\end{keywords}

\begin{AMS}
15A18, 65F15, 65F10
\end{AMS}

\section{Distinct eigenvalues after perturbation}
The spectrum of a matrix after perturbation is of interest in a wide variety
of applications and has been studied extensively in various particular cases,
with most work focussing on the case of symmetric rank one perturbations
\cite{wilkinson1965,golub1973,bunch1978,ipsen2009}. More general results
concern the Jordan form of the matrix after ``generic'' rank one
perturbations, i.e.~the set of rank one perturbations for which the analysis
does not hold has Lebesgue measure zero
\cite{hormander1994,savchenko2003,moro2003,mehr2013}. In this work we prove a
new theorem regarding the number of distinct eigenvalues of arbitrary matrices
perturbed by updates of arbitrary rank.

Let $\Lambda(M)$ be the set of distinct eigenvalues of a matrix $M$. Let $m_a(M,
\lambda)$ and $m_g(M, \lambda)$ be the algebraic and geometric multiplicity of $\lambda$ as an eigenvalue of $M$,
respectively.
\begin{definition}[Defectivity of an eigenvalue] \label{def:defectivity}
The defectivity of an eigenvalue $d(M, \lambda) \ge 0$ is the difference between
its algebraic and geometric multiplicities,
\begin{equation}
d(M, \lambda) \equiv m_a(M, \lambda) - m_g(M, \lambda).
\end{equation}
\end{definition}
\begin{definition}[Defectivity of a matrix]
The defectivity of a matrix $d(M)$ is the sum of the defectivities of its
eigenvalues:
\begin{equation}
d(M) \equiv \sum_{\lambda \in \Lambda(M)} \big( m_a(M, \lambda) - m_g(M, \lambda) \big).
\end{equation}
\end{definition}
Recall that $m_a(M, \lambda) \ge m_g(M, \lambda)$ for all $M$ and $\lambda$. Thus,
$d(M, \lambda) \ge 0$, and $d(M) \ge 0$.
Defectivity is a quantitative measure of nondiagonalizability: a matrix
is diagonalizable if and only if it has defectivity zero.
\begin{remark}
The defectivity of a matrix is clear from its Jordan form: it is the number of
off-diagonal ones.
\end{remark}

We now give the central theorem of this paper.
\begin{theorem} \label{thm:fullthm}
Let $A, B \in \mathbb{C}^{n \times n}$. If $C = A + B$, then $\left|\Lambda(C)\right| \le (\rank{B}+1) \left|\Lambda(A)\right| + d(A)$.
\end{theorem}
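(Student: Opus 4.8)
The plan is to split $\Lambda(C)$ into the eigenvalues it shares with $A$ and the ``new'' eigenvalues it does not, bounding the former trivially by $|\Lambda(A)|$ and spending the real effort on the latter. Writing $\Lambda(A)=\{\mu_1,\dots,\mu_k\}$ with $k=|\Lambda(A)|$ and $r=\rank{B}$, it then remains to show that $C$ has at most $rk+d(A)$ distinct eigenvalues outside $\Lambda(A)$.

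To count these, I would fix $\lambda\notin\Lambda(A)$, so that $A-\lambda I$ is invertible, and factor $\det(C-\lambda I)=\det(A-\lambda I)\det\!\big(I+(A-\lambda I)^{-1}B\big)$. Taking a rank factorization $B=FG^{*}$ with $F,G\in\mathbb{C}^{n\times r}$, the matrix determinant lemma turns the second factor into the $r\times r$ determinant $\det\!\big(I_r+G^{*}(A-\lambda I)^{-1}F\big)$. The crucial structural input is the minimal polynomial $\psi$ of $A$: since $\psi(\lambda)(\lambda I-A)^{-1}$ is a matrix polynomial of degree $\deg\psi-1$, one may write $(A-\lambda I)^{-1}=-Q(\lambda)/\psi(\lambda)$ and clear denominators, so that the new eigenvalues are \emph{precisely} the roots, lying outside $\Lambda(A)$, of the single polynomial $\hat q(\lambda)=\det\!\big(\psi(\lambda)I_r+R(\lambda)\big)$ with $R=-G^{*}QF$. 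Because $\deg\psi=s:=\sum_j\ell_j$, where $\ell_j$ is the index (largest Jordan block size) of $\mu_j$, and $\deg R\le s-1$, this $\hat q$ has degree at most $rs$.

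The naive conclusion ``new eigenvalues $\le rs\le r(k+d(A))$'' carries the wrong coefficient on $d(A)$, and removing this spurious factor of $r$ is the main obstacle. The fix is to divide out the vanishing that $\hat q$ is forced to have at each old eigenvalue $\mu_j$ before counting roots. I would establish the pointwise lower bound $\operatorname{ord}_{\mu_j}\hat q\ge r(\ell_j-1)-d(A,\mu_j)$ from two ingredients: first, the identity $\hat q=\psi^{r}\,p_C/p_A$ (with $p_A,p_C$ the characteristic polynomials of $A,C$) gives $\operatorname{ord}_{\mu_j}\hat q=r\ell_j-m_a(A,\mu_j)+m_a(C,\mu_j)$; second, subadditivity of rank applied to $C-\mu_j I=(A-\mu_j I)+B$ yields $m_a(C,\mu_j)\ge m_g(C,\mu_j)\ge m_g(A,\mu_j)-r$, and substituting $d(A,\mu_j)=m_a(A,\mu_j)-m_g(A,\mu_j)$ produces the claimed bound.

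Finally I would assemble the estimate. The distinct new eigenvalues are roots of $g(\lambda):=\hat q(\lambda)\big/\prod_j(\lambda-\mu_j)^{\operatorname{ord}_{\mu_j}\hat q}$, a nonzero polynomial (as $\hat q=\psi^{r}p_C/p_A\not\equiv0$) whose degree is at most $rs-\sum_j\operatorname{ord}_{\mu_j}\hat q\le rs-\sum_j\big(r(\ell_j-1)-d(A,\mu_j)\big)=rk+d(A)$, the telescoping using $\sum_j\ell_j=s$ and $\sum_j 1=k$. Hence $C$ has at most $rk+d(A)$ distinct eigenvalues off $\Lambda(A)$, and adding the at most $k$ shared ones gives $|\Lambda(C)|\le(r+1)k+d(A)=(\rank{B}+1)|\Lambda(A)|+d(A)$. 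I expect the two supporting facts to require careful bookkeeping — the polynomial identity $\psi(\lambda)I=(\lambda I-A)Q(\lambda)$ and the degree count for $\hat q$ — whereas the genuinely new idea is the order-of-vanishing estimate driven by the rank inequality $m_g(C,\mu_j)\ge m_g(A,\mu_j)-\rank{B}$, which is exactly what converts the loose factor $r\,d(A)$ into the sharp $d(A)$.
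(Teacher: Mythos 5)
Your proof is correct, but it takes a genuinely different route from the paper's, so it is worth comparing the two. The paper never forms a characteristic or secular polynomial: it bounds the number of new eigenvalues by $n - \sum_{\lambda\in\Lambda(A)} m_a(C,\lambda)$ using $\sum_{\lambda} m_a(C,\lambda)=n$, and then lower-bounds that sum termwise by exactly the inequality you single out at the end, $m_a(C,\lambda)\ge m_g(C,\lambda)\ge m_g(A,\lambda)-\rank{B}$, obtained from rank subadditivity applied to $C-\lambda I=(A-\lambda I)+B$. That three-line count already yields new eigenvalues $\le \rank{B}\left|\Lambda(A)\right|+d(A)$, with no rank factorization, determinant lemma, or minimal polynomial. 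Your detour through $B=FG^{*}$, Sylvester's identity, and $\psi(\lambda)I=(\lambda I-A)Q(\lambda)$ is sound, and your repair of the naive $r\,d(A)$ coefficient via the order-of-vanishing estimate is exactly right — but note that it collapses onto the paper's argument: unwinding $\hat q=\psi^{r}p_C/p_A$ gives $\deg g = \deg\hat q - \sum_j \operatorname{ord}_{\mu_j}\hat q = rs - \sum_j\bigl(r\ell_j - m_a(A,\mu_j)+m_a(C,\mu_j)\bigr) = n-\sum_{\lambda\in\Lambda(A)}m_a(C,\lambda)$, which is precisely the paper's count, and your lower bound on $\operatorname{ord}_{\mu_j}\hat q$ is the paper's termwise bound in polynomial clothing (including the harmless possibility that $r(\ell_j-1)-d(A,\mu_j)<0$, which only slackens a valid inequality since the true order is nonnegative). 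What your version buys is strictly more than the theorem asks: an explicit secular polynomial $\hat q(\lambda)=\det\bigl(\psi(\lambda)I_r+R(\lambda)\bigr)$ whose roots off $\Lambda(A)$ are exactly the new eigenvalues, with their algebraic multiplicities — useful raw material for sharpness or genericity statements in the spirit of the generic-perturbation literature the paper cites. What it costs is the bookkeeping you yourself flag; the paper's multiplicity-counting argument gets the same bound more elementarily.
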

\begin{proof}
Clearly $\left|\Lambda(C)\right| = \left|\Lambda(C) \cap \Lambda(A)\right| + \left|\Lambda(C)
\setminus \Lambda(A)\right|$, and the first term is bounded by $\left|\Lambda(A)\right|$. We seek an
upper bound for the quantity
\begin{equation}
\sum_{\begin{subarray}

\lambda \in \Lambda(C) \\
\lambda \notin \Lambda(A) \\
\end{subarray}} m_a(C, \lambda)
\end{equation}
as this bounds the number of new eigenvalues that the perturbation can introduce. (Every
eigenvalue $\lambda$ of $C$ must have $m_a(C, \lambda) \ge 1$.) Since for $M \in \mathbb{R}^{n \times n}$,
\begin{equation}
\sum_{\lambda \in \Lambda(M)} m_a(M, \lambda) = n,
\end{equation}
it follows that
\begin{equation}
\sum_{\begin{subarray}

\lambda \in \Lambda(C) \\
\lambda \notin \Lambda(A) \\
\end{subarray}} m_a(C, \lambda) + \sum_{\lambda \in \Lambda(A)} m_a(C, \lambda) = n,
\end{equation}
with the convention that $m_a(C, \lambda) = 0 \iff \lambda \notin \Lambda(C)$. Thus, the upper bound on the number of new eigenvalues
introduced is maximized when
\begin{equation}
\sum_{\lambda \in \Lambda(A)} m_a(C, \lambda)
\end{equation}
is minimized.

Let $\lambda \in \Lambda(A)$. We first
investigate $m_g(C, \lambda)$, the geometric multiplicity of $\lambda$ as an
eigenvalue of the perturbed matrix $C$. Using the fact that $\rank{X + Y} \le
\rank{X} + \rank{Y}$, we derive a lower bound for $m_g(C, \lambda)$:
\begin{subequations}
\begin{alignat}{2}
&\quad &\rank{A + B - \lambda I} &\le \rank{A - \lambda I} + \rank{B} \\
&\implies &n - \dk{A + B - \lambda I} & \le n - \dk{A - \lambda I} + \rank{B} \label{eqn:rankB} \\
&\implies &m_g(C, \lambda) &\ge m_g(A, \lambda) - \rank{B}. \label{eqn:rank_lowerbound}
\end{alignat}
\end{subequations}
Hence, the geometric multiplicity of an eigenvalue can at most decrease
by $r$ on perturbation by a rank-$r$ operator.

It therefore follows that
\begin{subequations} \label{eqn:allinequality}
\begin{align}
\sum_{\lambda \in \Lambda(A)} m_a(C, \lambda) &\ge \sum_{\lambda \in \Lambda(A)} m_g(C, \lambda) \\
                                              &\ge \sum_{\lambda \in \Lambda(A)} \big(m_g(A, \lambda) - \rank{B}\big) \qquad \qquad \qquad \ \text{ (by \eqref{eqn:rank_lowerbound})} \label{eqn:beforediagonalisability} \\
                                              &=   \sum_{\lambda \in \Lambda(A)} \big(m_a(A, \lambda) - \rank{B} - d(A, \lambda)\big) \qquad \text{ (by Definition \ref{def:defectivity})} \label{eqn:diagonalisability} \\
                                              &=   n - \rank{B}\left|\Lambda(A)\right| - d(A). \label{eqn:lowerbound}
\end{align}
\end{subequations}

The maximal number of new eigenvalues is achieved when
\eqref{eqn:allinequality} is an equality,
and
\begin{equation}
\sum_{\begin{subarray}

\lambda \in \Lambda(C) \\
\lambda \notin \Lambda(A) \\
\end{subarray}} m_a(C, \lambda) = n - \left(n - \rank{B}\left|\Lambda(A)\right| - d(A)\right) = \rank{B}\left|\Lambda(A)\right| + d(A).
\end{equation}
Hence
\begin{align}
\left|\Lambda(C)\right| &= \left|\Lambda(C) \cap \Lambda(A)\right| + \left|\Lambda(C) \setminus \Lambda(A)\right| \nonumber\\
                                  &\le \left|\Lambda(A)\right| + \rank{B}\left|\Lambda(A)\right| + d(A) = (\rank{B}+1)\left|\Lambda(A)\right| + d(A).
\end{align}
\end{proof}

\begin{corollary}
Let $A$ be diagonalizable $($i.e., $d(A) = 0)$ and let $B$ have rank one. If $C = A + B$, then $\left|\Lambda(C)\right| \le 2 \left|\Lambda(A)\right|$.
\end{corollary}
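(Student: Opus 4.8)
The plan is to obtain this as an immediate specialization of Theorem~\ref{thm:fullthm}, since the corollary's two hypotheses supply exactly the data that the general bound depends on beyond $\left|\Lambda(A)\right|$. The assumption that $B$ has rank one pins down $\rank{B} = 1$, and the assumption that $A$ is diagonalizable pins down $d(A) = 0$. No new machinery is required; the work is entirely in substituting these values.

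Concretely, I would first invoke Theorem~\ref{thm:fullthm} with $C = A + B$ to obtain
\begin{equation}
\left|\Lambda(C)\right| \le (\rank{B} + 1)\left|\Lambda(A)\right| + d(A).
\end{equation}
I would then substitute $\rank{B} = 1$ and $d(A) = 0$, which collapses the right-hand side to $(1+1)\left|\Lambda(A)\right| + 0 = 2\left|\Lambda(A)\right|$, yielding the claimed bound.

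The only point meriting any attention — and it is bookkeeping rather than a genuine obstacle — is verifying that the corollary's verbal hypotheses translate into the correct numerical substitutions. The equivalence between diagonalizability and $d(A) = 0$ is precisely the characterization of defectivity recorded immediately after Definition~\ref{def:defectivity}, while ``rank one'' means $\rank{B} = 1$ by definition. Both substitutions are therefore automatic, and I expect the proof to be a single line with no step that could plausibly fail.
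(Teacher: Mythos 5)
Your proposal is correct and matches the paper's (implicit) argument exactly: the corollary is stated in the paper without a separate proof precisely because it is the immediate specialization of Theorem~\ref{thm:fullthm} with $\rank{B} = 1$ and $d(A) = 0$. Your substitution and your appeal to the characterization of diagonalizability via defectivity are both exactly right, and nothing further is needed.
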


\section{Krylov iterations after a rank one update}
Consider the linear systems $Ax = b$ and $Cy = d$.
If $A$ is diagonalizable, then its minimal polynomial degree $\mpd{A} =
\left|\Lambda(A)\right|$, and an optimal Krylov method (GMRES \cite{saad1986},
MINRES \cite{paige1975}, or CG \cite{hestenes1952}, if applicable) will
compute $x$ exactly in the same number of iterations. (Here, and henceforth, exact
arithmetic is assumed.)

\begin{theorem} \label{thm:krylov}
Consider the linear systems $Ax = b$ and $Cy = d$.
Let $A$ be diagonalizable, and let $B$ have rank one. If $C = A + B$, then $y$
can be computed exactly with an optimal Krylov method in at most double the
number of iterations required for $x$.
\end{theorem}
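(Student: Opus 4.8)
The plan is to reduce the statement to a bound on the minimal polynomial degree of $C$. For an optimal Krylov method, the number of iterations needed to solve a system $Mz = e$ exactly is bounded by $\mpd{M}$; since $A$ is diagonalizable, the iteration count for $Ax = b$ is governed by $\mpd{A} = \left|\Lambda(A)\right|$, as noted above. It therefore suffices to prove $\mpd{C} \le 2\left|\Lambda(A)\right|$, i.e.\ to exhibit a polynomial of degree at most $2\left|\Lambda(A)\right|$ that annihilates $C$. The tempting shortcut---invoking the Corollary to conclude $\left|\Lambda(C)\right| \le 2\left|\Lambda(A)\right|$---does \emph{not} finish the argument, because $C$ need not be diagonalizable and so $\mpd{C}$ may strictly exceed $\left|\Lambda(C)\right|$; controlling this gap is the crux.

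First I would set $q(t) = \prod_{\lambda \in \Lambda(A)}(t-\lambda)$, the minimal polynomial of the diagonalizable matrix $A$, so that $q(A) = 0$ and $\deg q = \left|\Lambda(A)\right| =: k$. The key observation is that $q(C)$ has small rank. Writing $q(C) = q(C) - q(A)$ and telescoping the difference $\prod_{\lambda}(C - \lambda I) - \prod_{\lambda}(A - \lambda I)$ into $k$ summands, each of which contains a central factor $C - A = B$, every summand has rank at most $\rank{B} = 1$; hence $\rank{q(C)} \le k$.

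Next I would exploit that $W := \mathrm{ran}(q(C))$ is $C$-invariant, since $q(C)$ commutes with $C$, and has $\dim W = \rank{q(C)} \le k$. The restriction $C|_W$ acts on a space of dimension at most $k$, so its minimal polynomial $p_W$ satisfies $\deg p_W \le \dim W \le k$. Since $p_W$ annihilates $C$ on $W$ and every vector of the form $q(C)z$ lies in $W$, we obtain $p_W(C)\, q(C)\, z = 0$ for all $z$, that is $(p_W \cdot q)(C) = 0$. The minimal polynomial of $C$ therefore divides $p_W \cdot q$, giving $\mpd{C} \le \deg p_W + \deg q \le 2k = 2\left|\Lambda(A)\right|$. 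Combining this with the Krylov bound yields the claim.

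The hard part is choosing the right reduction rather than any single computation: recognising that the Krylov count is controlled by $\mpd{C}$ (not $\left|\Lambda(C)\right|$), and that the possible defectivity of $C$ must be absorbed. The mechanism that achieves this is the pairing of two facts---that $q(C)$ is of low rank and that its range is a small $C$-invariant subspace---so I expect verifying the rank-of-difference estimate and the divisibility $(p_W \cdot q)(C) = 0$ to be the only points requiring genuine care.
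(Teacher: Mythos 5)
Your proof is correct, and it takes a genuinely different route from the paper's. The paper splits into cases: if $C$ is diagonalizable it invokes the corollary to get $\mpd{C} = \left|\Lambda(C)\right| \le 2\left|\Lambda(A)\right|$; if not, it reasons about the Jordan structure of $C$, using \eqref{eqn:rank_lowerbound} to argue that each eigenvalue of the diagonalizable $A$ (all of whose Jordan blocks are $1 \times 1$) can lose at most one Jordan block, so that the enlarged blocks of $C$ have sizes summing to at most $\left|\Lambda(A)\right|$, and then asserts---without detailed computation---that any such arrangement at most doubles the iteration count. You instead bound $\mpd{C}$ directly and uniformly: with $q$ the minimal polynomial of $A$ of degree $k = \left|\Lambda(A)\right|$, the telescoping identity gives $\rank{q(C)} = \rank{q(C) - q(A)} \le k\,\rank{B} = k$ (valid because the $C$-factors commute among themselves, as do the $A$-factors, so each of the $k$ summands isolates one factor of $B$); the range $W$ of $q(C)$ is $C$-invariant of dimension at most $k$, and since $p_W(C)\,q(C)\,z = p_W(C|_W)(q(C)z) = 0$ for all $z$, the minimal polynomial of $C$ divides $p_W \cdot q$, whence $\mpd{C} \le 2k$. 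This buys three things over the paper's argument: no case split on the diagonalizability of $C$; a fully rigorous replacement for the paper's ``it is straightforward to calculate'' step; and immediate generality---the identical telescoping yields $\mpd{C} \le (\rank{B}+1)\,\mpd{A}$ for updates of any rank and even for nondiagonalizable $A$ (taking $q$ to be the minimal polynomial of $A$), which in the Krylov setting is strictly stronger than what Theorem \ref{thm:fullthm} provides. What the paper's route buys is cohesion: it reuses the geometric-multiplicity machinery of Theorem \ref{thm:fullthm} and shows structurally where new eigenvalues and enlarged Jordan blocks can arise. One caveat you share with the paper rather than introduce: both proofs interpret ``the number of iterations required for $x$'' as $\mpd{A} = \left|\Lambda(A)\right|$, the worst case over right-hand sides $b$, which is the reading the paper itself adopts in its preamble to the theorem.
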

\begin{proof}
If $C$ is diagonalizable, then $\mpd{C} = \left|\Lambda(C)\right| \le
2\left|\Lambda(A)\right| = 2\ \mpd{A}$, i.e.~the number of distinct eigenvalues
bounds the number of Krylov iterations required to solve the perturbed matrix.

If $C$ is not diagonalizable, we know from \eqref{eqn:rank_lowerbound} that the
number of Jordan blocks associated with an eigenvalue $\lambda \in \Lambda(A)$
can decrease by at most $1 = \rank{B}$ in $C$. Since by diagonalizability of $A$
all its Jordan blocks are of size $1 \times 1$, the largest Jordan block of
$C$ can be at most of size $\left|\Lambda(A)\right| \times
\left|\Lambda(A)\right|$, which can occur when all eigenvalues of $A$ lose exactly
one Jordan block. It is straightforward to calculate that with any
arrangement of new Jordan blocks of $C$ with sizes adding to $\left|\Lambda(A)\right|$,
the number of Krylov iterations required to compute $y$ is bounded by twice that
of $x$.
\end{proof}

\section{Application: Schur complement preconditioners and deflation}
Theorem \ref{thm:fullthm} is mainly of interest in situations where
$|\Lambda(A)|$ is expected to be small. Such a situation arises in the
application of preconditioners based on Schur complements.

Let $F: \mathbb{R}^n \to \mathbb{R}^n$ be the (discretized) residual of a nonlinear problem
\begin{equation} \label{eqn:problem}
F(u) = 0
\end{equation}
with block-structured Jacobian
\begin{equation}
J =
\begin{bmatrix}
X & Y \\
Z & 0
\end{bmatrix},
\end{equation}
with $X$ invertible. This structure arises in many problems, including the
Stokes and Navier--Stokes equations, and in equality-constrained optimization
\cite{benzi2005}. Linear systems involving $J$ are typically solved with Schur
complement preconditioners. Define
\begin{equation}
P = 
\begin{bmatrix}
X & 0 \\
0 & -S
\end{bmatrix},
\end{equation}
where the Schur complement $S = -ZX^{-1}Y$. If $P$ is used as a preconditioner,
then the preconditioned operator $P^{-1}J$ is diagonalizable and has exactly
three distinct eigenvalues (with exact inner solves for the application of
$P^{-1}$) \cite{murphy2000}. Similar results hold for more general
block-structured Jacobians and preconditioners based on the Schur
complement: the preconditioned operator has a small number of distinct
eigenvalues \cite{ipsen2001}.

Suppose \eqref{eqn:problem} supports multiple solutions. One approach to compute them is to
initialize Newton's method from many different initial guesses, hoping to start
in different basins of convergence. A highly effective alternative is to
\emph{deflate} known solutions \cite{farrell2014}. Suppose one solution $u_1^*$
of \eqref{eqn:problem} has been computed from an initial guess $u_0$ and
additional solutions are sought. We construct a modified residual
\begin{equation} \label{eqn:deflatedproblem}
G(u) = M(u; u_1^*) F(u),
\end{equation}
via the application of a \emph{deflation operator} $M: \mathbb{R}^n \times \mathbb{R}^n \to \mathbb{R}$ to the residual
$F$. This deflation operator guarantees two properties. The first is the
preservation of solutions of $F$, i.e.  for $u \neq u_1^*$, $G(u) = 0 \iff F(u)
= 0$. The second is that Newton's method (or other rootfinding algorithms)
applied to $G$ will not discover $u_1^*$ again, as
\begin{equation}
\liminf_{u \rightarrow u_1^*} \|G(u)\| > 0,
\end{equation}
i.e.~along any sequence converging to the known root, its existence is masked by
the nonconvergence of the deflated residual to zero. ($M$ achieves this by introducing
a pole of the appropriate strength at the known solution.) Thus, if Newton's method
applied to $G$ converges from $u_0$, it will converge to another solution $u_2^*
\neq u_1^*$. A typical deflation operator is
\begin{equation}
M(u; u_1^*) = \frac{1}{\|u - u_1^*\|^p} + 1,
\end{equation}
where $p$ controls the strength of the pole introduced.

The process can then be repeated until no more solutions are found
from $u_0$ in a fixed number of Newton iterations. Several solutions can
be deflated with an operator $M: \mathbb{R}^n \times \mathbb{R}^n \times \cdots \mathbb{R}^n \to \mathbb{R}$ via
\begin{equation}
M(u; u_1^*, \dots, u_k^*) = \frac{1}{\|u - u_1^*\|^p} \cdots \frac{1}{\|u - u_k^*\|^p} + 1.
\end{equation}
For full details, see Brown
and Gearhart \cite{brown1971} and Farrell
et al.~\cite{farrell2014}.

The Jacobian $\tilde{J}$ of the deflated problem \eqref{eqn:deflatedproblem} is a rank one
update of a scaling of the Jacobian of the original problem \eqref{eqn:problem}, regardless of
the number of solutions deflated:
\begin{equation}
\tilde{J} = M J + F E^T,
\end{equation}
where $E = M' \in \mathbb{R}^n$.
Hence, the preconditioned deflated Jacobian is also a rank one update of the
preconditioned original Jacobian,
\begin{equation}
C = P^{-1} \tilde{J} = M P^{-1} J + (P^{-1} F) E^T = A + B,
\end{equation}
and Theorem \ref{thm:krylov} guarantees that the solutions of linear systems involving
the deflated Jacobian can be computed exactly in no more than twice the number
of Krylov iterations required for the undeflated Jacobian.

\bibliographystyle{siam}
\bibliography{literature}

\end{document}